\newcommand{\Vcan}{V^{\can}}
\DeclareMathOperator{\cont}{cont}
\newcommand{\Tan}{T}
\newcommand{\op}{\text{op}}
\DeclareMathOperator{\Aut}{Aut}
\newcommand{\Autpiu}[1]{\Aut^0_{#1}}
\DeclareMathOperator{\Aff}{Aff}
\theoremstyle{plain}
\newtheorem{lemma}{Lemma}[subsection]
\newtheorem{proposition}[lemma]{Proposition}
\newtheorem{corollary}[lemma]{Corollary}
\theoremstyle{definition}
\newtheorem{definition}[lemma]{Definition}
\newtheorem{remark}[lemma]{Remark}
\theoremstyle{remark}
\theoremstyle{plain}
\theoremstyle{definition}
\theoremstyle{remark}
\newtheorem{ntz}[lemma]{Notation}
\newcommand{\mC}{\mathbb C}
\newcommand{\mG}{\mathbb G}
\newcommand{\mZ}{\mathbb Z}
\newcommand{\calE}{\mathcal E} 
\newcommand{\calF}{\mathcal F}
\newcommand{\calO}{\mathcal O}
\newcommand{\calT}{\mathcal T}
\newcommand{\goF}{\mathfrak F}
\newcommand{\gob}{\mathfrak b}
\newcommand{\gog}{\mathfrak g}
\newcommand{\gom}{\mathfrak m}
\newcommand{\gon}{\mathfrak n}
\renewcommand{\geq}    {\geqslant}
\DeclareMathOperator{\Hom}  {Hom}
\DeclareMathOperator{\Der}  {Der}
\DeclareMathOperator{\Spec} {Spec}
\DeclareMathOperator{\Set}{Set}
\DeclareMathOperator{\id}{id}
\DeclareMathOperator{\univ}{univ}
\DeclareMathOperator{\triv}{triv}
\DeclareMathOperator{\can}{can}
\DeclareMathOperator{\Op}{Op}
\DeclareMathOperator{\Conn}{Conn}
\DeclareMathOperator{\ad}{ad\,}
\DeclareMathOperator{\Alg}{Alg}
\title{A note on the bundle underlying Opers}
\author{Luca Casarin}
\begin{document}

\maketitle

\begin{center}
    \textbf{Abstract}
    
    \smallskip
    \justifying
    \noindent In this note we write down a proof of the following well known fact, in order to make the literature more transparent. Let $\gog$ be a simple Lie algebra, then for any smooth curve $C$, the bundle underlying any $\gog$-Oper depends only on the curve and it is induced by the canonical $\Aut O$ bundle $\Aut_C$ on $C$.  
\end{center}

\tableofcontents

\section{Introduction}

The notion of Opers was introduced by Beilinson and Drinfeld in \cite{beilinson1991quantization} and by Drinfeld and Sokolov in \cite{drinfeld1984lie} in the case of the punctured disk. Since then, Opers have been intensively studied in the context of representation theory, integrable systems, and the geometric Langlands program. In particular, they are intimately related to the representation theory of the affine Kac-Moody algebras at the critical level, as shown in the works of Feigin and Frenkel \cite{feigin1992affine} and Frenkel and Gaitsgory \cite{frenkel2009local} and since the seminal work of Beilinson and Drinfeld \cite{beilinson1991quantization} Opers represented a crucial object in the proof of the Geometric Langlands conjectures carried out by Gaitsgory et. al. (see for instance \cite{2024proofii}).

Given a reductive group $G$ and a smooth curve $C$ over $\mC$, Opers are special kind of $G$-local systems on $C$, that is, $G$-torsors (also called $G$-bundles) with a connection. In particular, given a $G$-torsor $\goF$ on $C$, an Oper structure on it is the datum of a $B$-reduction (for a fixed Borel subgroup $B$) $\goF_B$ of $\goF$, together with a connection on $\goF$ satisfying certain properties (see Definition \ref{def:opersmoothcurve}). 

The goal of this note is to write down the proof of Proposition \ref{prop:uniqueopertorsor}, in which we show that the underlying $B$-torsor $\goF_B$ of an Oper is always isomorphic to a canonical $B$-torsor $\goF_0$ which depends only on the curve $C$.

This result is known as its statement can be found in \cite[3.1.7,3.1.8]{beilinson1991quantization} and \cite[3.1,3.2,3.3]{beilinson2005opers} for the $\mathfrak{sl}_2$ case and \cite[3.4]{beilinson2005opers} and \cite[3.1.9]{beilinson1991quantization} for the case of a general $\gog$. That said, we could not find a detailed proof of this important carachterization. The goal of this note is therefore to fill in these details that seem missing in the present literature.

For completeness we review various known constructions. We start by recalling the notion of torsor and some results on them and then move on to recall the definition of the groups $\Aut O$, ${\Autpiu{}} O$, Jet schemes and the canonical ${\Autpiu{}} O$-torsor on any smooth curve $C$ over $\mC$. Finally, we recall the definition of Opers for a simple Lie algebra $\gog$ and review some of their basic properties. Most of the latter is directly taken from \cite{BDchirali}.

\subsection{Acknowledgements} 

The author would like to thank Andrea Maffei for many useful discussions and suggestions.

\noindent\textbf{Funding:} The author was funded by national
PRIN Grants 2022S8SSW and 2022HMBTTL and by INFN - CSN4 (Commissione Scientifica Nazionale 4 - Fisica Teorica), MMNLP project.

\section{Preliminaries and notation}\label{app:classic}

We work over $\mC$, or any algebraically closed field of characteristic $0$. We fix an simple Lie algebra $\gog$, consider its associated group of adjoint type $G$, a Borel subgroup $B \subset G$ and fix a smooth connected curve $C$ over $\mC$. We will talk about (étale) \emph{coordinates} on smooth curves, by that we mean local functions $t \in \calO_C$ such that $\Omega^1_C = \calO_C dt$. We think of our geometric objects (schemes in particular) as functors $\Aff^{\op}_{\mC}\to \Set$, where $\Aff_{\mC}=\Alg_{\mC}^{\text{op}}$ is the category of affine schemes over $\mC$ and $\Alg_{\mC}$ is the category of commutative, unital algebras over $\mC$.

\subsection{Torsors}

We start by recalling the definition of a torsor and study $B$-torsors, for $B$ a Borel subgroup of a reductive group. In what follows we freely use the notion of fppf, smooth, étale and Zariski topology on $\Aff_{\mC}$.

\begin{definition}
	Let $H$ be any algebraic group. A (right) $H$-torsor on $C$ is a scheme $\goF$ on which $H$ acts on the right, equipped with a faithfully flat map of finite presentation $\goF \to C$ which is $H$-invariant and for which the morphism
	\[
		\goF \times H \to \goF \times_C \goF \qquad (s,h) \mapsto (s,sh)
	\]
	is an isomorphism. A torsor $\goF$ is said to be trivial if there exists an $H$-equivariant isomorphism $\goF \simeq C \times H$ over $C$. A torsor $\goF$ is said to be Zariski (resp. étale, smooth, fppf) locally trivial if there exists a Zariski (resp. étale, smooth, fppf) cover $C' \to C$ such that $\goF' = C'\times_C \goF$ is a trivial torsor.
\end{definition}

\begin{remark}\label{rmk:recollectiontorsor} We list some well known facts about torsors:
	\begin{enumerate}
		\item One may define the notion of a \emph{left} $H$-torsor in an analogous way, by requiring that the action of $H$ on $\goF$ is on the left. Any right torsor gives a left torsor by considering the left action of $H$ defined by $H \times \goF \to \goF, \quad (h,s) \mapsto sh^{-1}$. Similarly any left $H$ torsor induces a right torsor, so that these notions are essentially equivalent.
		\item The fact that an $H$ torsor $\goF$ is trivial is equivalent to the existence of a section $\sigma : C \to \goF$. Indeed the induced map $C\times H \to \goF \quad (x,h) \mapsto \sigma(x)h$ is an isomorphism after we take the pullback along $\goF \to C$ so that it is itself an isomorphism by faithfully flat descent. I follows that if $C' \to C$ is an fppf (resp. smooth, étale, Zariski) cover. Then the fact that $C' \times_C \goF$ is trivial is equivalent to the existence of a lift
		\[\begin{tikzcd}
	&& \goF \\
	\\
	{C'} && C
	\arrow[from=1-3, to=3-3]
	\arrow[dashed, from=3-1, to=1-3]
	\arrow[from=3-1, to=3-3]
\end{tikzcd}\]
		\item Let $\tau$ be any of the Zariski, étale, smooth, fppf topology on $C$. Then isomorphism classes of $\tau$-locally trivial torsors are classified by a pointed set $H^1_\tau(C,H)$ which may be computed à la \v{C}ech. 
		Given a short exact sequence of algebraic groups $1 \to K \to H \to Q \to 1$ for which $H \to Q$ is surjective morphism of sheaves for the $\tau$ topology, there is an attached exact sequence of pointed sets
		\[
			1 \to H^0(C,K) \to H^0(C,H) \to H^0(C,Q) \to H^1_\tau(C,K) \to H^1_\tau(C,H) \to H^1_\tau(C,Q),
		\]
		where $H^0(C,H)$ is the group of maps $C \to H$, with base point the trivial map $C \to e \to H$.
	\end{enumerate} 
\end{remark}

\begin{remark}\label{rmk:torsorsmoothtrivial}
	The requirement that $\goF \to C$ is faithfully flat and of finite presentation, together with the assumption that $\goF \times H \simeq \goF \times_C \goF$ is an isomorphism implies that any torsor is locally trivial for the fppf topology. If the group $H$ is smooth, which is our case since we are working with a field of characteristic $0$, then, by faithfully flat descent for smooth morphism, the map $\goF \to C$ is also smooth. This implies that $\goF$ is smooth-locally trivial as well.
\end{remark}

With the following Proposition we recall that any torsor for a smooth group is actually étale-locally trivial, in particular, in the discussion on torsors relative to smooth groups it is enough to treat the case of étale-locally trivial torsors.

\begin{proposition}\label{prop:torsoretaletrivial}
	Let $H$ be a smooth algebraic group and let $\goF \to C$ be an $H$ torsor. Then there exists a Zariski open cover $C = \bigcup_i U_i$ together with étale maps $U'_i \to U_i$ such that $U'_i \times_{U_i} \goF_{|U_i}$ is a trivial torsor. In particular $\goF$ is étale-locally trivial.
\end{proposition}

\begin{proof}
	By \cite[Example 2.51]{book2005fundamental} any smooth cover $C' \to C$ admits a refinement $C'' \to C' \to C$ for which $C'' \to C$ is an étale cover. It follows that if $C' \times_C \goF$ is trivial the same goes for $C'' \times_C \goF$. 
\end{proof}

\begin{lemma}\label{lem:extzariskitrivial}
	Let $1 \to K \to H \to Q \to 1$ be a short exact sequence of smooth groups. Assume that all $K$-torsors and $Q$-torsors are Zariski-locally trivial. Then any $H$-torsor is Zariski-locally trivial.
\end{lemma}

\begin{proof}
	Let $\goF$ be an $H$-torsor on $\goF$, which, by Proposition \ref{prop:torsoretaletrivial} we know to be étale-locally trivial. For any point $x \in C$ we want to find a Zariski open neighborhood $x \in U \subset C$ such that $\goF_{|U}$ is trivial. Consider the image $\goF_Q$ of $\goF$ along $H^1_{\text{ét}}(C,H) \to H^1_{\text{ét}}(C,Q)$, which by assumption is a Zariski-locally trivial torsor. We may find a Zariski-open neighborhood $x \in U'$ such that $\goF_Q$ is trivial on $U'$. By the exact sequence $H^1_{\text{ét}}(U',K) \to H^1_{\text{ét}}(U',H) \to H^1_{\text{ét}}(U',Q)$ it follows that $\goF_{|U'}$ comes from a $K$-torsor $\goF_K$. Since by assumption $\goF_K$ is Zariski locally trivial, we may find a Zariski open subset $x \in U$ on which $\goF_K$ is trivial. It follows that $\goF_{|U} = (\goF_K)_{|U}\times_K H$ is trivial on $U$ as well.
\end{proof}

We will apply the previous Lemma together with the following fact.

\begin{proposition}\label{prop:gmgazariskitrivial}
	The following hold:
	\begin{enumerate}
		\item Torsors for the group $\mG_m$ are Zariski-locally trivial;
		\item Torsors for the group $\mG_a$ are Zariski-locally trivial.
	\end{enumerate}
\end{proposition}

\begin{proof}
	This can be found in \cite[EXP. XI, Prop. 5.1]{SGA1}.
\end{proof}

\begin{corollary}\label{coro:btorsorzariskitrivial}
	Let $G$ be a split reductive group and $B \subset G$ a Borel subgroup. Then any $B$-torsor is Zariski-locally trivial.
\end{corollary}

\begin{proof}
	We apply Lemma \ref{lem:extzariskitrivial} together with Proposition \ref{prop:gmgazariskitrivial}. By the general theory of reductive groups, $B$ is an extension of a unipotent group by a torus. By Lemma \ref{lem:extzariskitrivial}, it suffices to show that torsors for tori and unipotent groups are Zariski-locally trivial. A torus is a product (and thus an extension) of copies of $\mG_m$, so the claim follows from point (1) of Proposition \ref{prop:gmgazariskitrivial}. Any smooth unipotent group is an extension of copies of $\mG_a$, so the claim in this case follows from point (2) of Proposition \ref{prop:gmgazariskitrivial}.
\end{proof}

\section{\texorpdfstring{$\Aut O$}{Aut O}, Jet schemes and all that}

We start by recalling the definition and first properties of the groups $\Aut O, {\Autpiu{}} O$ and then move on by recalling Jet schemes the canonical ${\Autpiu{}} O$-torsor on any smooth curve $C$ over $\mC$.

\subsection{The groups \texorpdfstring{$\Aut O$}{Aut O}, \texorpdfstring{${\Autpiu{}} O$}{Aut+ O}}

Let $O = \mC[[z]]$ and let $\gom \subset O$ be the ideal $z\mC[[z]]$. For any commutative $\mC$-algebra $R$ consider $O_R = R[[z]]$ and the ideal $\gom_R = zR[[z]]$. We write $\Aut O$ for the group functor $\Aut O(R) = \Aut_R^{\cont}(O_R)$, where the topology of $R[[z]]$ is that generated by the ideal $\gom_R$. \index{$\Aut O$} This can be shown to be an ind-scheme
. We move on and recall the subgroup $\Autpiu{} O \subset \Aut O$ and some of its properties.

\begin{definition}
    We consider the group schemes ${\Autpiu{}} O,\Autpiu{n} (O)$ for $n\geq2$ defined as the following functors on $\mC$-algebras:
    \begin{align*}
		{\Autpiu{}}O(R) &= \{ \rho \in \Aut_R^{\cont}(O_R) : \rho(\gom_R) \subset \gom_R \},\\
		{\Autpiu{n}} O(R) &= \{ \rho \in \Aut_R(O_R/\gom_R^n) : \rho(\gom_R) \subset \gom_R \}.
	\end{align*}
	There\index{${\Autpiu{}} O, \Autpiu{n} O$} are natural surjective (also on $R$ points) group homomorphisms $${\Autpiu{}} O \xrightarrow{\pi_n} \Autpiu{n} O, \quad \Autpiu{m} O \xrightarrow{\pi_{n,m}} \Autpiu{n} O$$ which present ${\Autpiu{}} O$ as the limit of the group schemes $\Autpiu{n} O$ along the maps $\pi_{n,m}$.
\end{definition}

By continuity any automorphism $\rho \in \Aut O(R)$ (resp. $\in \Autpiu{n} O$) is determined by the formal series $\rho(z) = \sum_{k\geq 0} \rho_k z^k$, where $\rho_k \in R$ (resp. $\rho(z) = \sum_{k\geq 0}^{n-1} \rho_kz^k$). The condition for this series to determine an element in ${\Autpiu{}} O (R)$ is $\rho_0 = 0, \rho_1 \in R^*$. Thus, we have bijections
    \[
        {\Autpiu{}} O (R) \simeq \left\{ \sum_{k\geq 1} \rho_kz^k : \rho_{1} \in R^* \right\} \quad \Autpiu{n} O (R) = \left\{ \sum^{n-1}_{k = 1} \rho_kz^k : \rho_{1} \in R^* \right\}.
    \]
    The group multiplication reads as
    \(
        \tau_1(z) \cdot \tau_2(z) = \tau_2(\tau_1(z)).
    \)

    We denote by $\Der O$ and $\Der^0 O$\index{${\Autpiu{}} O$!$\Der^0 O$} the Lie algebras of $\Aut O$ and ${\Autpiu{}} O$ respectively. We have $\Der O = \mC[[z]]\partial_z$,\index{$\Aut O$!$\Der O$} while $\Der^0 O$ can be identified with the subalgebra of derivations $f(z)\partial_z \in \Der O$ for which $f(z) \in \gom$.

\subsection{Jet schemes and the canonical \texorpdfstring{${\Autpiu{}} O$}{Aut+ O}-torsor}

\begin{definition} Given any functor of $\mC$ algebras $X : \Alg_{\mC} \to \Set$, its jet space $JX$ and its $n$th jet spaces are defined as functors by the formulas
\[
	JX(R) = X(R[[z]]) \qquad J_nX(R) = X(R[z]/z^n).
\]
if $X$ is a scheme of finite type then it can be shown that both $JX$ and $J_nX$\index{Jet schemes, $JX$, $J_nX$} are representable by schemes which are affine over $X$, in this case we call them the jet scheme and the $n$-th jet scheme respectively. There is a natural \emph{left} ${\Autpiu{}}O$ action on $JX$ for which the map $JX \to X$ is invariant.
\end{definition}

\begin{definition}
	Let $C$ be a smooth curve over $\mC$. We define $\Aut_C$\index{$\Aut_C$}, a subfunctor of $JC$, as follows:
	\[
		\Aut_C(R) = \left\{ x \in JC(R)=C(R[[z]]):\; \hat{x}^*\Omega^1_C \to \Omega^{1,\cont}_{R[[z]]/R} \text{ is an isomorphism} \right\}.
	\]
	Here $\hat{x}^*\Omega^{1}_C = \varprojlim_n x^*_{|\Spec R[z]/z^n}\Omega^1_{C}$, while $\Omega^{1,\cont}_{R[[z]]/R} = \varprojlim_n \Omega^1_{(R[z]/z^n)/R} = R[[z]]dz$. These are naturally $R[[z]]$ modules. $\Aut_C$ is a sheaf on the Zariski topology of $C$ and the left action of ${\Autpiu{}} O$ on $JC$ preserves this subfunctor.
\end{definition}

Assume that $C = \Spec A$ is an affine, smooth connected curve and consider an $R$-point $x : \Spec R \to C$. Let $J_x$ be the ideal of $A\otimes R$ defined as the kernel of the multiplication map $A \otimes R \xrightarrow{\text{mult}(x^*\otimes \,\_\,)} R$, so that $J_x$ is generated by the elements of the form $a \otimes 1 - 1\otimes x(a)$. We consider $\calT_x R$, the topological $R$-algebra defined by
\[
	\calT_x R = \varprojlim_n \frac{A\otimes R}{J_x^n}
\]

\begin{remark}
	If $C = \Spec A$ is affine with a coordinate $t \in A$ (so that $\Omega^1_C = \calO_C dt$) then there exists an isomorphism
	\[
		\rho^x_t : \calT_x R \to R[[z]] \qquad \text{such that } (\rho^x_t)^{-1}(z) = t - x(t)
	\]
\end{remark}

\begin{proof}
	It is enough to prove that the $R$-linear map $R[z] \to A\otimes R / J_x^n$ which sends $z$ to $t - x(t)$ induces an isomorphism $R[z]/z^n \to A\otimes R/J_x^n$ for any $n \geq 0$. This map is obtained by tensoring along $x^* : A \to R$ the universal map $A[z] \to A\otimes A/ I^n$ where $I = J_{\id} = \ker( \mathrm{mult} : A\otimes A \to A)$ for the universal $A$ point $\id : A \to A$. Then the result of the remark can be easily proved by induction on $n$, by using the fact that $I/I^2 = A\overline{t \otimes 1 - 1 \otimes t}$, which is equivalent to the assumption that $\Omega^1_C = \calO_C dt$.
\end{proof}

\begin{lemma}\label{lem:changecoordtR}
	Assume that $C = \Spec A$ is affine with coordinates $t,s \in A$, let $x : \Spec R \to C$ be a morphism. The composition
	\[
		\rho^x_{st} : R[[z]] \xrightarrow{(\rho^x_s)^{-1}} \calT_x R \xrightarrow{\rho^x_t} R[[z]],
	\]
	which is a continuous automorphism of $R[[z]]$, satisfies
	\[
		\rho^x_{st}(z) = \sum_{k\geq 1} x\left( \frac{1}{k!}\partial^k_t s \right)z^k.
 	\] 
\end{lemma}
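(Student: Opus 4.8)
The plan is to reduce the statement to a Taylor expansion in $\calT_x R$, and then to compute that expansion by transporting the derivation $\partial_t$ through the isomorphism $\rho^x_t$. Since $\rho^x_{st}$ is a continuous $R$-algebra automorphism of $R[[z]]$, it is determined by the image of $z$, and by construction
\[
\rho^x_{st}(z)=\rho^x_t\big((\rho^x_s)^{-1}(z)\big)=\rho^x_t\big(s\otimes 1-1\otimes x(s)\big),
\]
the last element being understood in $\calT_x R$ via the previous remark. Hence it suffices to compute $\rho^x_t(f\otimes 1)$ for an arbitrary $f\in A$ and then to specialize to $f=s$.

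Let $\partial_t$ be the derivation of $A$ dual to the coordinate $t$, i.e.\ the unique derivation with $df=(\partial_t f)\,dt$, so that $\partial_t(t)=1$. I claim the Taylor formula
\[
\rho^x_t(f\otimes 1)=\sum_{k\geq 0} x\!\left(\tfrac{1}{k!}\partial_t^k f\right)z^k .
\]
Granting this, applying it to $f=s$ and subtracting the $k=0$ term $\rho^x_t(1\otimes x(s))=x(s)$ yields precisely the asserted expression for $\rho^x_{st}(z)$, since $x(\tfrac{1}{0!}\partial_t^0 s)=x(s)$.

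To prove the Taylor formula I would introduce the derivation $D=\partial_t\otimes\id$ on $A\otimes R$. Because $D(a\otimes 1-1\otimes x(a))=\partial_t(a)\otimes 1\in A\otimes R$, the Leibniz rule gives $D(J_x^n)\subseteq J_x^{n-1}$; thus $D$ is continuous for the $J_x$-adic topology and extends uniquely to $\calT_x R$. Under $\rho^x_t$ this extension corresponds to $\partial_z$: both sides are continuous $R$-linear derivations, and $\rho^x_t\big(D(t\otimes 1-1\otimes x(t))\big)=\rho^x_t(1)=1=\partial_z(z)$, while a continuous $R$-derivation of $R[[z]]$ is determined by where it sends $z$. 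Writing $g=\rho^x_t(f\otimes 1)=\sum_k b_k z^k$, the coefficient is $b_k=\tfrac{1}{k!}(\partial_z^k g)|_{z=0}$; pulling back along $\rho^x_t$, which identifies evaluation at $z=0$ with the augmentation $\epsilon_x:\calT_x R\to \calT_x R/J_x=R$, one gets $b_k=\tfrac{1}{k!}\epsilon_x(D^k(f\otimes 1))=\tfrac{1}{k!}\,x(\partial_t^k f)$, which is the claimed coefficient.

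The one point that needs genuine care is the intertwining $\rho^x_t\circ D=\partial_z\circ\rho^x_t$ together with the continuity that makes $D^k$ and the coefficient-extraction well defined on the completion $\calT_x R$; everything else is routine once $\partial_t\otimes\id$ has been identified with $\partial_z$. After that identification the computation is simply the usual Maclaurin expansion, and convergence is automatic in the $(z)$-adic (equivalently $J_x$-adic) topology, so no analytic estimates are required.
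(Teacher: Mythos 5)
Your proof is correct and follows essentially the same route as the paper: both reduce to expanding $\rho^x_t(s - x(s))$ by the Taylor/Maclaurin formula in $R[[z]]$ and use the fact that $\rho^x_t$ intertwines $\partial_t$ with $\partial_z$ and that evaluation at $z=0$ corresponds to $x$. The only difference is that you justify the intertwining (continuity of $\partial_t\otimes\id$ for the $J_x$-adic topology and the check on $z$) where the paper simply asserts it ``by construction,'' which is a welcome addition rather than a divergence.
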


\begin{proof}
	To show the lemma we need to compute the image of $s-x(s)$ along $\rho^x_t$. Notice that by construction $\rho^x_t$ intertwines with the actions of $\partial_t$ on $\calT_x R$ and $\partial_z$ on $R[[z]]$. In addition, for any $f \in R[[z]]$, if we write $\overline{f} = f(0) \in R$, it follows by the Taylor formula that $f = \sum_{k\geq 0} \frac{1}{k!}\overline{\partial^k_z f} z^k$. Recall also that by construction $\overline{\rho^x_t(g)} = x(g)$ for any $g \in A \subset R\otimes A$. From these remarks it follows that
	\begin{align*}
		\rho^x_t(s-x(s)) &= \sum_{k\geq 0} \frac{1}{k!}\overline{\partial^k_z \rho_t^x(s-x(s))} z^k = \sum_{k\geq 0} \frac{1}{k!}\overline{\rho_t^x(\partial_t^k(s-x(s)))} z^k \\ &= \sum_{k\geq 1} x\left( \frac{1}{k!}\partial^k_t s \right)z^k.
	\end{align*}
    The last equation comes from the fact that $\overline{\rho_t^x(s - x(s))} = \overline{\rho^t_x(s)} - x(s) = 0$ and that $\partial^k_t(s -x(s)) = \partial_t^ks$ for $k\geq 1$.
\end{proof}

\begin{proposition}
	The projection $\Aut_C \to C$ exhibits $\Aut_C$ as a left ${\Autpiu{}} O$-torsor for the Zariski topology over $C$. When $C$ is affine with a coordinate $t$, it's $R$ points may be described as follows:
    \[
        \Aut_C(R) = \left\{ (x,\varphi) \text{ such that } x\in C(R), \varphi: \calT_xR \xrightarrow{\simeq} R[[z]], \varphi(J_x) \subset zR[[z]]\right\}
    \]
    It follows that the choice of a local coordinate $t$ on $C$ induces a trivialization $\triv_t : C \times {\Autpiu{}} O \to \Aut_C$, which restricted to $C \times \{ 1 \}$ is given by $x \mapsto (x,\rho^x_t)$.
\end{proposition}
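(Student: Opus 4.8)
The statement is Zariski-local on $C$ and every functor involved is a Zariski sheaf, so I would first reduce to the case where $C = \Spec A$ is affine and carries a coordinate $t$; such opens cover any smooth curve since $\Omega^1_C$ is a line bundle generated Zariski-locally by an exact differential. The heart of the argument is a reinterpretation of jets. Given $x \in JC(R) = C(R[[z]])$, presented as a $\mC$-algebra map $\tilde x \colon A \to R[[z]]$, let $x\colon \Spec R \to C$ be its reduction modulo $z$. Extending $R$-linearly gives $A\otimes R \to R[[z]]$, and since this map reduces to $x$ modulo $z$ it carries $J_x$ into $zR[[z]]$, hence $J_x^n$ into $z^nR[[z]]$; as $R[[z]]$ is $z$-adically complete it factors uniquely through a continuous $R$-algebra map $\varphi \colon \calT_x R \to R[[z]]$ with $\varphi(J_x)\subset zR[[z]]$. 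Conversely any such $\varphi$ yields a jet by precomposing with $A \to \calT_x R$, and one checks its reduction is again $x$ using $\calT_x R/J_x = R$. This sets up a bijection, functorial in $R$, between $JC(R)$ and the pairs $(x,\varphi)$ appearing in the statement.

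Next I would pin down which pairs lie in $\Aut_C$. Writing $f(z) := \varphi(t) = \sum_{k\geq 0} c_k z^k \in R[[z]]$, the pullback $\hat x^*\Omega^1_C$ is free of rank one over $\calT_x R$ on the class of $dt$, and the differential map $\hat x^*\Omega^1_C \to \Omega^{1,\cont}_{R[[z]]/R} = R[[z]]\,dz$ carries this generator to $d(\varphi(t)) = f'(z)\,dz$, i.e. it is multiplication by $f'(z) = \tfrac{d}{dz}\varphi(t)$. Hence the defining condition of $\Aut_C$ is that $f'(z)$ be a unit, that is $c_1 \in R^*$. On the other hand, using $\rho^x_t$ of the previous remark, $\varphi$ corresponds to the continuous endomorphism $\psi := \varphi\circ(\rho^x_t)^{-1}$ of $R[[z]]$, and since $(\rho^x_t)^{-1}(z) = t - x(t)$ one computes $\psi(z) = \varphi(t) - x(t) = \sum_{k\geq 1} c_k z^k$. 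By the explicit description of ${\Autpiu} O$ recalled above, $\psi$ lies in ${\Autpiu} O(R)$ — equivalently $\psi$, and thus $\varphi$, is an isomorphism — exactly when $c_1 \in R^*$. The two conditions coincide, proving that $\Aut_C(R)$ is precisely the set of pairs $(x,\varphi)$ with $\varphi$ an isomorphism.

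Finally I would read off the torsor structure. The left ${\Autpiu} O$-action on $JC$ is $\sigma\cdot \tilde x = \sigma\circ \tilde x$; as $\sigma$ fixes base points it acts on the $\varphi$-side by $\varphi \mapsto \sigma\circ\varphi$, hence on $\psi$ by left translation in ${\Autpiu} O$. Defining $\triv_t(x,\sigma)$ to be the pair with $\varphi = \sigma\circ\rho^x_t$ (so that $\sigma = \id$ gives the section $x\mapsto (x,\rho^x_t)$) therefore produces a ${\Autpiu} O$-equivariant map $C\times {\Autpiu} O \to \Aut_C$ over $C$; the previous paragraph shows it is a fibrewise bijection on $R$-points, hence an isomorphism. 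This exhibits $\Aut_C$ as a left ${\Autpiu} O$-torsor, trivialized by $t$. For global consistency I would note that for a second coordinate $s$ the two sections differ by $x\mapsto \rho^x_{st}$, which by Lemma \ref{lem:changecoordtR} has linear coefficient $x(\partial_t s)\in R^*$ and so indeed lands in ${\Autpiu} O(R)$, furnishing the transition functions.

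I expect the main obstacle to be the two identifications in the first two paragraphs: checking carefully that jets correspond to continuous maps out of the completion $\calT_x R$ (the completeness and base-point bookkeeping), and computing the differential in the definition of $\Aut_C$ explicitly enough to match it with invertibility of $\varphi$. Once these are in place, the torsor statement and the trivialization are formal.
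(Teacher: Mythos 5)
Your proposal is correct and follows essentially the same route as the paper's proof: reduce to the affine coordinatized case, factor the $R$-linear extension of a jet through the completion $\calT_xR$ to get the pair $(x,\varphi)$, match invertibility of $\partial_z\tilde{\varphi}(t)$ with $\varphi$ being an isomorphism via $\rho^x_t$, and then observe that $(x,\rho)\mapsto(x,\rho\circ\rho^x_t)$ is an equivariant fibrewise bijection. Your version is slightly more explicit than the paper's (you spell out that the map $\hat{x}^*\Omega^1_C\to R[[z]]\,dz$ is multiplication by $\partial_z\varphi(t)$, and you note the transition functions $\rho^x_{st}$), but the substance is identical.
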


\begin{proof}
	The assertion is Zariski local on $C$ so we may assume that $C = \Spec A$ is affine with a chosen coordinate $t$ and reduce ourselves to construct an ${\Autpiu{}} O$ equivariant morphism $\triv_t : C \times {\Autpiu{}} O \to \Aut_C$ and to check that it is an isomorphism. Recall that to provide such a morphism is the same as to give a section $\triv_t : C \to \Aut_C$.

    It is enough to prove the description of $\Aut_C(R)$ which appears in the statement of the Proposition. All other statements follow, since the map
    \begin{align*}
        \triv_t : C \times {\Autpiu{}} O &\to \left\{ (x,\varphi):\; x\in C(R), \varphi: \calT_xR \xrightarrow{\simeq} R[[z]], \varphi(J_x) \subset zR[[z]]\right\} \\
        (x,\rho) &\mapsto (x,\rho\rho^x_t) 
    \end{align*}
    is clearly functorial, ${\Autpiu{}} O$-equivariant and bijective. We work fiberwise, fixing $x \in C(R) = \Hom(A,R)$ and considering the set $(\Aut_C(R))_x := \{ \tilde{\varphi} \in \Aut_C(R) \text{ with } \tilde{\varphi}_0 = x \}$. Where for $\tilde{\varphi} : A \to R[[z]]$ we set $\tilde{\varphi}_0$ to be the morphisms obtained from $\tilde{\varphi}$ by setting $z=0$. 

    We may describe $\Aut_C(R)$ as follows:
	\[
		\Aut_C(R) = \{ \tilde{\varphi} : A \to R[[z]] \text{ such that } (\partial_z\tilde{\varphi}(t)) \in (R[[z]])^* \}.
	\]
	Consider the unique $R$-linear extension $\tilde{\varphi}_R : R \otimes 
 A \to R[[z]]$ of $\tilde{\varphi}$. Notice that the condition $\tilde{\varphi}_0 = x$ translates into the condition that the ideal $J_x \subset R\otimes A$ is sent to $zR[[z]]$ under $\tilde{\varphi}_R$ which implies that $\tilde{\varphi}_R$ factors through the completion $A\otimes R \to \calT_x R \xrightarrow{\varphi} R[[z]]$ via a continuous $R$-morphism $\varphi$ which satisties $\varphi(J_x) \subset zR[[z]]$. Finally, since $\calT_xR$ is itself isomorphic to $R[[z]]$ via $\rho^x_t$, the condition $\partial_z\tilde{\varphi}(t) \in (R[[z]])^*$ is satisfied if and only if $\varphi$ is an isomorphism.
	
We obtain that $(\Aut_C(R))_x$ is in bijection with continuous $R$-isomorphisms $\varphi : \calT_xR \to R[[z]]$ such that $J_x \mapsto zR[[z]]$, as claimed.
\end{proof}

\begin{lemma}\label{lem:changecoordautx}
		Let $C = \Spec A$ be an affine smooth curve over $\mC$ and let $t,s$ be coordinates on $C$ (i.e. $t,s\in A$ such that $\Omega^1_C = Adt = Ads$).  Consider $\triv_{st} = \triv^{-1}_s\triv_t : C \times {\Autpiu{}} O \to C \times {\Autpiu{}} O$. This, being a morphism of ${\Autpiu{}} O$-torsors over $C$, is determined by an element $\triv_{st}^{\univ} \in {\Autpiu{}} O (A)$. We have
	\[
		\triv_{st}^{\univ}(z) = \sum_{k\geq 1} \frac{1}{k!}(\partial^k_ts)z^k \in A[[z]].
	\]
\end{lemma}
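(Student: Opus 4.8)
The plan is to reduce everything to Lemma~\ref{lem:changecoordtR} by evaluating $\triv_{st}$ on the tautological point of $C=\Spec A$. First I would unwind $\triv_{st}=\triv_s^{-1}\triv_t$ fiberwise. Recall from the previous Proposition that, after identifying an element of ${\Autpiu}O(R)$ with the continuous automorphism of $R[[z]]$ it induces (under which the stated group law $\tau_1\cdot\tau_2=\tau_2(\tau_1(z))$ becomes ordinary composition of automorphisms), the map $\triv_t$ sends $(x,\rho)$ to the pair whose second component is the composite $\rho\circ\rho^x_t:\calT_xR\to R[[z]]\to R[[z]]$, and similarly for $s$. Applying $\triv_s^{-1}$ precomposes with $(\rho^x_s)^{-1}$, so I obtain
\[
\triv_{st}(x,\rho)=\big(x,\ \rho\circ\big(\rho^x_t\circ(\rho^x_s)^{-1}\big)\big)=\big(x,\ \rho\cdot\rho^x_{st}\big),
\]
where $\rho^x_{st}=\rho^x_t\circ(\rho^x_s)^{-1}$ is exactly the automorphism of $R[[z]]$ from Lemma~\ref{lem:changecoordtR}, now read as an element of ${\Autpiu}O(R)$, and the last equality rewrites the composite as the group product.

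This display exhibits $\triv_{st}$ as right multiplication by the assignment $x\mapsto\rho^x_{st}$. Indeed, any automorphism of the trivial left ${\Autpiu}O$-torsor $C\times{\Autpiu}O$ that commutes with the left action has the form $(x,\rho)\mapsto(x,\rho\cdot u(x))$ for a unique natural transformation $u:C\to{\Autpiu}O$, and this $u$ is precisely the datum the statement calls $\triv^{\univ}_{st}$. By the Yoneda lemma $u$ is recovered from its value on the tautological point $\id_A\in C(A)=\Hom(A,A)$, so $\triv^{\univ}_{st}=\rho^{\id_A}_{st}\in{\Autpiu}O(A)$, the element produced by Lemma~\ref{lem:changecoordtR} when one takes $R=A$ and $x=\id_A$.

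It then remains only to quote Lemma~\ref{lem:changecoordtR} with $R=A$ and $x=\id_A$, which gives
\[
\triv^{\univ}_{st}(z)=\rho^{\id_A}_{st}(z)=\sum_{k\geq 1}\id_A\!\left(\tfrac{1}{k!}\partial^k_t s\right)z^k=\sum_{k\geq 1}\tfrac{1}{k!}(\partial^k_t s)\,z^k,
\]
the asserted formula, since $\id_A$ acts as the identity of $A$. As a sanity check one may note that $\rho^x_{st}$ really is a group element: its linear coefficient is $x(\partial_t s)$, which is a unit because $s$ being a coordinate forces $\partial_t s\in A^*$ (two generators of the free rank-one module $\Omega^1_C$ differ by a unit).

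The step I expect to be the only real obstacle is bookkeeping with the conventions: verifying that, under the identification of group elements with automorphisms, the stated multiplication law matches composition in the order that makes $\triv_{st}$ come out as \emph{right} multiplication, and hence that the determining element is $\rho^x_{st}$ itself rather than its inverse. Once this is pinned down there is no analytic or geometric content left, the entire computation having been absorbed into Lemma~\ref{lem:changecoordtR}.
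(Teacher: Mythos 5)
Your proof is correct and follows essentially the same route as the paper: the paper likewise computes $\triv_{st}(x,\rho)=(x,\rho\rho^x_t(\rho^x_s)^{-1})$, evaluates at the tautological $A$-point $(\id_A,1)$, and invokes Lemma~\ref{lem:changecoordtR}. Your version is in fact more careful than the paper's (which is terse and contains notational slips), spelling out the Yoneda step and the compatibility of the group law with composition.
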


\begin{proof}
	By construction of $\psi_t$ on $R$ points we have
	\[
		\triv_{st} : (x,\rho) \mapsto (x,\rho\rho^x_{t}(\rho^x_{s})^{-1})
	\]
	and to compute where $\psi^{\univ}_{st}$ we just need to evaluate $\psi_{st}$ at the $A$-point $(\id_A,1)$. The statement of the Lemma then follows by Lemma \ref{lem:changecoordtR}.
\end{proof}

\section{Opers}

Let us set up some notation: here and in the remaining pages we fix a simple Lie algebra $\gog$, consider $G$, the algebraic group of adjoint type having $\gog$ as a Lie algebra and fix $B$, a Borel subgroup of $G$. In this Section we recall some facts about the space of Opers $\Op_{\gog}(C)$ for a simple Lie algebra $\gog$ and a smooth connected (not necessarily complete) curve $C$. Opers will be defined as couples $(\calF,\nabla)$, where $\calF$ is a $B$-torsor on a smooth curve $C$ and $\nabla$ is a connection on the induced $G$-torsor $\goF_G = \goF\times_B G$ satisfying some additional properties. Recall that, thanks to Corollary \ref{coro:btorsorzariskitrivial}, we may focus on treating Zariski-locally trivial torsors.

\subsection{Connections on smooth curves}

Let $C$ be a smooth curve, $G$ be an algebraic group over $\mC$.

\begin{ntz}\label{ntz:twistedspace} We will consider (right) $G$-torsors which are Zariski-locally trivial on $C$. Given such a $G$ torsor $\goF$ and a $G$-space $V$ we can consider the $\goF$-twisted $V$-bundle, denoted by $V_{\goF}$. This is constructed as the quotient space $\goF \times_G V$ along the diagonal $G$ action.  When $V$ is a finite dimensional vector space (i.e. a $G$ representation) $V_{\goF}$ is a vector bundle (i.e. a locally free $\calO_C$-module of finite rank).
\end{ntz}

Recall the Atiyah exact sequence of vector bundles on $C$ 
\[
	0 \to \gog_{\goF} \to \calE_{\goF} \to \Tan_C \to 0.
\]
Where $\Tan_C$ is the tangent bundle on $C$ and $\calE_{\goF}$ is the quotient of $T_\goF$, the tangent space on $\goF$, with respect to the natural $G$ action.

\begin{definition}\label{def:connection}
	A \textit{connection} on a $G$ torsor $\goF$ is a section $\Tan_C \to \calE_{\goF}$.\index{connections on a $G$ torsor} Connections naturally form a sheaf on the Zariski topology of $C$, to be denoted by $\Conn(\goF)$, which is naturally a $\gog_{\goF}\otimes \Omega^1_C$ torsor. 

	This definition works in families as well. Given any affine scheme $R$ an $R$-linear connection on $\goF_R$ is an $\calO_{C_R}$ linear section $\Tan_{C_R/R} \to \calE_{\goF_R}$ of exact sequence of locally free $\calO_{C_R}$-modules 
	\[
		0 \to \gog_{\goF_R} \to \calE_{\goF_R} \to \Tan_{X_R/R} \to 0
	\]
	obtained via pullback from the above one. 
\end{definition}

\begin{remark}\label{rmk:gaugeaction}
	If $\calF = C \times G$ is the trivial bundle, then there exists a canonical connection and $\Conn(C\times G) = \gog\otimes_{\mC}\Omega^1_C$. The sheaf of morphism $C \to G$ naturally acts, via the induced automorphism of the torsors $C \times G$, on $\Conn(C\times G)$. The induced action on $\gog\otimes_\mC \Omega^1_C$ is known as the \emph{Gauge action}\index{Gauge action}. For $g \in G(C), \omega \in \gog\otimes\Omega^1_C$ it is given by the following formula:
	\[
		g\cdot \omega = \text{Ad}_{g}(\omega) - dg\cdot g^{-1}.
	\]
\end{remark}

\subsection{Opers}

This material is taken from \cite[Ch. 3]{beilinson1991quantization} and \cite{beilinson2005opers}. Let $C$ be a smooth curve over $\mC$ and $G$ a simple algebraic group of adjoint type, fix a maximal torus and Borel subgroup $H \subset B \subset G$ let $N = [B,B]$. Consider $\gog$, the Lie algebra of $G$, with its root decomposition $\gog = \oplus_{\alpha \in \Phi} \gog^\alpha$ and fix a set of simple positive roots $\Delta = \{ \alpha_i \}$. Let $h_0$ be twice the sum of the fundamental coweights in $\mathfrak{h}$, so that $\ad h_0$ acts on each root space $\gog^\alpha$, for $\alpha = \sum n_i\alpha_i$ as $2|\alpha| = \sum 2n_i$. Fix also $f_0 = \sum_{i} f_{i}$ where $f_{i} \in \gog^{-\alpha_i}$ are non-zero elements. From this data it is possible to construct a unique principal $\mathfrak{sl}_2$-triple $\mathfrak{sl}_2 \simeq \{ e_0,h_0,f_0\} \subset \gog$. Since $G$ is adjoint, $\frac{1}{2}h_0$ may be exponentiated to a morphism $\check{\rho} : \mG_m \to H \to B$, the principal cocharacter, which induces the principal gradation on $\gog = \oplus_k \gog_k$, where $\gog_k = \oplus_{|\alpha| = k} \gog^\alpha$. We consider also $e:\mG_a \to B$, the exponential of $e_0$; this couple of morphisms induces a morphism $r : (B_2)_{\ad} \to B$, where $(B_2)_{\ad}$ \index{$(B_2)_{\ad}$}is the adjoint version of the group of upper triangular matrices $B_2 \subset SL_2$, so that 
\[
    (B_2)_{\ad} \simeq \left\{ \left( \begin{matrix} a & b \\ 0 & 1\end{matrix}\right) \right\}
\]

The action of $\check{\rho}$ induces a decreasing filtration of Lie algebras $\gog^{\geq k} \subset \gog, k \in \mZ$ such that $\gog^{\geq 1} = \gon, \gog^{\geq 0} = \gob$ and $\gog^{\geq k}/\gog^{\geq k+ 1} = \text{gr}^k \gog = \oplus_{|\alpha| = k} \gog^{\alpha}.$

Let $\goF_B$ be a $B$ torsor on $C$ and denote by $\goF_G$ the induced $G$ torsor. There is a commutative diagram of vector bundles on $C$
\[\begin{tikzcd}
	0 & {\gob_{\goF_B}} & {\calE_{\goF_B}} & {\Tan_C} & 0 \\
	0 & {\gog_{\goF_B}} & {\calE_{\goF_G}} & {\Tan_C} & 0
	\arrow[from=1-1, to=1-2]
	\arrow[from=1-2, to=1-3]
	\arrow[hook, from=1-2, to=2-2]
	\arrow[from=1-3, to=1-4]
	\arrow[hook, from=1-3, to=2-3]
	\arrow[from=1-4, to=1-5]
	\arrow["{=}"{description}, from=1-4, to=2-4]
	\arrow[from=2-1, to=2-2]
	\arrow[from=2-2, to=2-3]
	\arrow[from=2-3, to=2-4]
	\arrow[from=2-4, to=2-5]
\end{tikzcd}\]
Hence there is a natural inclusion $\Conn(\goF_B) \subset \Conn(\goF_G)$ which may be identified with the kernel of a natural map  $c : \Conn(\goF_G) \to (\gog/\gob)_{\goF_B}\otimes\Omega^1_X$. Given $\nabla \in \Conn(\goF_G)$, the section $c(\nabla)$ is constructed locally: on any open subset on which $\goF_B$ is trivial is defined by taking an arbitrary connection $\nabla_{\gob}$ on $\goF_B$ (which locally exists) and then computing $\nabla - \nabla_{\gob} \text{ mod } \gob\otimes\Omega^1_C$, which does not depend on the choice of $\nabla_{\gob}$. The morphism $c$ is equivariant with respect to the action of $\gog_{\goF}\otimes\Omega^1_C$ and therefore surjective as a morphism of sheaves on $C$.

\begin{definition}\label{def:opersmoothcurve}
	A $\gog$-oper\index{$\gog$-oper} is a couple $(\goF_B,\nabla)$ where $\goF_B$ is a $B$ torsor on $C$ and $\nabla$ is a connection of $\goF_G$ on $C$ such that 
	\begin{enumerate}
		\item $c(\nabla) \in (\text{gr}^{-1} \gog)_{\goF_B} \otimes \Omega^1_C \subset (\gog/\gob)_{\goF_B}\otimes\Omega^1_C$.
		\item For each negative simple root $\alpha$ the section $c(\nabla)^\alpha \in \Gamma(C,\gog^{\alpha}_{\goF} \otimes \Omega^1_C)$ never vanishes.
	\end{enumerate}
	There is an obvious notion of isomorphism of $\gog$-opers, we define $\Op_\gog(C)$ to be the groupoid of $\gog$-opers and isomorphisms between them. By \cite[1.3]{beilinson2005opers} a $\gog$-oper does not have any non-trivial automorphisms, it follows that $\Op_{\gog}(C)$ is equivalent to a set and that the assignment $U \mapsto \Op_{\gog}(U)$ is a sheaf of sets for the Zariski topology on $C$.
\end{definition}

\subsection{Canonical representative for Opers}\label{ssec:appendixcanonicalrepopers}

We recall here the description of $\Op_\gog(C)$ in the case where there exists a coordinate $t \in \calO_C$. 

Recall that we have a fixed $\mathfrak{sl}_2$ principal triple $\mathfrak{sl}_2 \simeq \{e_0.h_0,f_0\} \subset \gog$, which, by adjointess of $G$, induces a morphism $r : (B_2)_{\ad} \to G$ and consider $\mG_a \simeq N \subset (B_2)_{\ad}$. Let $\Vcan = \gog^N = \gog^{e_o}$, it is well known that $\check{\rho}$\index{$\Vcan$} acts on $\Vcan$ with eigenvalues $d_i$, where $d_i+1$ are the exponents of $\gog$. Fix also $x_i$ to be a basis of eigenvectors for $\Vcan$.

The following proposition goes back to \cite{drinfeld1984lie}.

\begin{proposition}\label{prop:descrlocopervcan}
	Let $(\goF,\nabla) \in \Op_\gog(C)$ be $\gog$-Oper. Assume that $C$ admits a function $t \in \calO_C$ such that $\Omega^1_C = \calO dt$ (i.e. a coordinate) and that $\goF$ is trivial. Then there is a unique element $\omega_\nabla \in \Vcan\otimes\Omega^1_{C} = \Vcan \otimes \calO_Cdt$ such that
	\[
		(\goF,\nabla) \simeq (C\times B, d + f_0dt + \omega_\nabla)
	\]
	as Opers. Since Opers do not have automorphisms, the above isomorphism is unique. This is true in families so that
	\[
		\Op_\gog(C)(R) \simeq \Vcan \otimes \Omega^1_{C_R/R}(C_R) =  \Vcan\otimes \calO(C)\otimes R dt.
	\]
\end{proposition}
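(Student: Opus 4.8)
The plan is to trivialize everything using the coordinate $t$ together with the assumed triviality of $\goF$, thereby reducing the statement to a gauge-fixing problem, and then to run the Drinfeld--Sokolov normalization in two steps: first with the torus, then with the unipotent radical, the second being the heart of the matter.

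First I would fix an isomorphism $\goF \cong C\times B$. A connection on $\goF_G$ then becomes $\nabla = d + A\,dt$ with $A \in \gog\otimes\calO_C$, and by Remark~\ref{rmk:gaugeaction} an isomorphism of opers between two such trivialized objects is exactly a gauge transformation by some $g \in B(C) = \Hom(C,B)$, acting by $g\cdot A = \text{Ad}_g A - (\partial_t g)g^{-1}$. The oper conditions of Definition~\ref{def:opersmoothcurve} translate, after this trivialization, into $A \equiv \sum_i \psi_i f_i \pmod{\gob\otimes\calO_C}$ with each $\psi_i \in \calO_C$ nowhere vanishing, hence a unit since $C$ is affine. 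So the task becomes: show that every such $A$ is carried to a unique $f_0 + \omega$ with $\omega \in \Vcan\otimes\calO_C$ by a unique $g \in B(C)$.

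Writing $g = hn$ with $h \in H(C)$, $n \in N(C)$, the first step is the torus normalization. Since $\text{Ad}_h$ scales the $\gog^{-\alpha_i}$-component by $\alpha_i(h)^{-1}$, while $-(\partial_t h)h^{-1} \in \goh\otimes\calO_C \subset \gob\otimes\calO_C$, and since $G$ is adjoint so that the simple roots give an isomorphism $H \cong \mG_m^{\dim H}$, there is a unique $h$ with $\alpha_i(h) = \psi_i$; it brings $A$ to the form $f_0 + v$ with $v \in \gob\otimes\calO_C$. The second, crucial step is the unipotent gauge-fixing. Grading by the principal grading $\gog = \oplus_k \gog_k$ (so $f_0 \in \gog_{-1}$), the principal $\mathfrak{sl}_2$-theory gives the grading-compatible decomposition $\gog = \ker(\ad e_0)\oplus \operatorname{im}(\ad f_0)$; since $\Vcan = \ker(\ad e_0)\subset\gon$, this restricts to $\gob = \Vcan \oplus \ad f_0(\gon)$, with $\ad f_0$ injective on each $\gog_k$, $k\geq 1$. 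Writing $n = \exp(u)$, $u = \sum_{k\geq 1} u_k \in \gon\otimes\calO_C$, I would solve $n\cdot(f_0 + v) = f_0 + \omega$ degree by degree: in degree $d$ the unknown $u_{d+1}$ enters only linearly, through $-\ad f_0(u_{d+1}) \in \ad f_0(\gog_{d+1})$, all other contributions depending on $v$ and on the already-determined $u_1,\dots,u_d$; imposing that the degree-$d$ output lie in $\Vcan$ fixes the $\ad f_0(\gog_{d+1})$-part and hence determines $u_{d+1}$ uniquely by injectivity, leaving $\omega_d \in \Vcan\cap\gog_d$. The recursion terminates because $\gog$ is finite-dimensional, producing unique $u$ and $\omega$. \emph{This inductive normalization, governed by the $\mathfrak{sl}_2$-decomposition of $\ad f_0$, is the main obstacle.}

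For uniqueness of $\omega_\nabla$, suppose $g \in B(C)$ satisfies $g\cdot(f_0 + \omega) = f_0 + \omega'$ with $\omega,\omega' \in \Vcan\otimes\calO_C$. Comparing $\gog_{-1}$-components---noting that $\text{Ad}_n$ only raises degrees and the $-(\partial_t g)g^{-1}$ term lies in $\gob$---forces $\alpha_i(h) = 1$ for all $i$, hence $h = 1$ and $g \in N(C)$; then the uniqueness in the second step, applied to $v = \omega$ whose canonical form is itself via $n = 1$, gives $g = 1$ and $\omega' = \omega$. The uniqueness of the oper isomorphism itself is the no-automorphism property recorded in Definition~\ref{def:opersmoothcurve}. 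Finally, every construction above is $R$-linear and manifestly functorial in $R$, so once $\goF_R$ is trivial over $C_R$ the same argument yields the family statement $\Op_\gog(C)(R) \cong \Vcan\otimes\Omega^1_{C_R/R}(C_R)$.
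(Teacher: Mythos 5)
Your argument is correct, and it is worth noting that the paper itself does not prove this proposition at all --- it only records the statement and cites Drinfeld--Sokolov --- so your write-up supplies a proof where the source has none. What you give is the standard gauge-fixing argument, and the key steps all check out: after trivializing $\goF$ and $\Omega^1_C$ the oper conditions do say $A \equiv \sum_i \psi_i f_i$ mod $\gob\otimes\calO_C$ with each $\psi_i$ a unit; adjointness of $G$ makes $(\alpha_i) : H \to \mG_m^{\mathrm{rk}\,\gog}$ an isomorphism, so the torus step has a unique solution; and the principal $\mathfrak{sl}_2$ decomposition $\gog_d = (\Vcan\cap\gog_d) \oplus \ad f_0(\gog_{d+1})$ with $\ad f_0$ injective on $\gog_{d+1}$ for $d\geq 0$ (no lowest-weight vectors in non-negative principal degree) makes the degree-by-degree recursion well-posed, since $u_{d+1}$ indeed enters the degree-$d$ component only through $-\ad f_0(u_{d+1})$ while $\mathrm{Ad}_n(v)$ and $(\partial_t n)n^{-1}$ contribute only terms in $u_1,\dots,u_d$. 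Two small points to tighten. First, the gauge action is a left action, so normalizing with $h$ first and $n$ second corresponds to the factorization $g = nh$ rather than $g=hn$; this is pure bookkeeping. Second, the family statement as displayed requires that \emph{every} $R$-point of $\Op_\gog(C)$ have trivializable underlying $B$-torsor, not merely that the argument works once triviality is granted; this is in fact automatic here --- the nonvanishing sections $c(\nabla)^{\alpha_i}$ of $\gog^{-\alpha_i}_{\goF}\otimes\Omega^1_{C_R/R}$ together with $\Omega^1_C=\calO_C dt$ trivialize the line bundles $\gog^{-\alpha_i}_{\goF}$, hence the induced $H$-torsor because the simple roots generate the character lattice of the adjoint torus, and the residual $N$-torsor over the affine curve $C_R$ is trivial since $N$ is unipotent --- but it deserves a sentence rather than the conditional ``once $\goF_R$ is trivial.''
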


Given $\omega_\nabla \in \Vcan\otimes\Omega^1_{C}$ and a coordinate $t \in \calO_C$ we will write $\omega_{\nabla}^t \in \Vcan \otimes \calO_C\otimes R$ for the element such that $\omega_{\nabla}^tdt = \omega_{\nabla}$ and $\omega_{\nabla}^{t,i} \in \calO_C\otimes R$ to denote the $i$-th component of $\omega_{\nabla}$ with respect to the basis $x_i$.

\begin{remark}\label{rmk:changecoordinateoper}
	Under the assumption that $C$ admits coordinates fix an oper $(\goF,\nabla) \in \Op_\gog(C)(R)$ for which $\goF$ is trivial. Given two coordinates $t,s \in \calO_C\otimes R$ consider the two unique elements $\omega_\nabla^t,\omega_\nabla^s$ such that
	\[
		(C\times B,d + (f + \omega_\nabla^t)dt) \simeq (\goF,\nabla) \simeq (C\times B, d + (f + \omega_\nabla^s)ds).
	\]
	Then
	\begin{align*}
		\omega^{s,1}_\nabla &= (\partial_st)^{2}\omega^{t,1}_\nabla -\frac{1}{2}\{t,s\}, \\
		\omega^{s,j}_\nabla &= (\partial_st)^{d_j+1}\omega^{t,1}_\nabla \quad j > 1,
	\end{align*}
	where $\{t,s\} = \left( \frac{\partial_s^3t}{\partial_st}- \frac{3}{2}\left(\frac{\partial^2_st}{\partial_st}\right)^2\right)$ is the \textit{Schwarzian Derivative}. The isomorphism above is provided by the element
	\[
		e\left(\frac{\partial^2_st}{2\partial_st}\right)\check{\rho}(\partial_s t) \in B(C).
	\]
\end{remark}

\begin{proof}
	Recall that the group $G(\calO_C(C)\otimes R)$ acts on the set of connections by Gauge transformations:
	\[
		g\cdot (d + A) = d + \text{Ad}_g(A) - dg\cdot g^{-1}.
	\]
	We have
	\[
		d + (f +\omega^t_\nabla)dt = d + (f\otimes\partial_st + \omega^t_\nabla\partial_st)ds
	\]
	and to bring it back into the canonical form we first need to act with $\check{\rho}(\partial_st)$, using the Gauge action we have
	\begin{align*}
		d + (f\otimes\partial_st + \omega^t_\nabla\partial_st)dt &\simeq \check{\rho}(\partial_st) \cdot \left( d + (f\otimes\partial_st + \omega^t_\nabla\partial_st)ds \right) = \\
		&= d + (f + \text{Ad}_{\check{\rho}(\partial_st)}(\omega^t_\nabla)\partial_st)ds -\frac{1}{2}h_o\otimes\frac{\partial^2_st}{\partial_st}ds
	\end{align*}
	We then act with $\mG_a \xrightarrow{e}  N \subset PSL_2 \to G$ to obtain
	\begin{align*}
		d + (f\otimes\partial_st + \omega^t_\nabla\partial_st)dt &\simeq e\left(\frac{\partial_s^2t}{2\partial_st}\right)\cdot \left(d + (f + \text{Ad}_{\check{\rho}(\partial_st)}(\omega^t_\nabla))ds -\frac{1}{2}h_o\otimes\frac{\partial^2_st}{\partial_st}ds \right) = \\
		&= d + (f + \text{Ad}_{\check{\rho}(\partial_st)}(\omega^t_\nabla)\partial_st)ds -\frac{1}{2} e_0 \otimes \left( \frac{\partial_s^3t}{\partial_st}- \frac{3}{2}\left(\frac{\partial^2_st}{\partial_st}\right)^2\right)ds.
	\end{align*}
	Since $e_0 = v_1$ the results follows.
\end{proof}

Using the description of canonical representatives we finally are able to give a proof of the following Proposition.

\begin{proposition}\label{prop:uniqueopertorsor}
	Let $r_O$ be the composition $r_O : {\Autpiu{}} O \to {\Autpiu{}}_3 (O) \simeq (B_2)_{\ad} \xrightarrow{r} B$ and consider \index{$\goF_0$}$\goF_0 = \Aut_C \times_{r_O} B$ which is a $B$-torsor on $C$. Let $(\goF,\nabla)$ be a $\gog$-oper on $C$. Then there exists an isomorphism $\goF \simeq \goF_0$.
\end{proposition}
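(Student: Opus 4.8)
The plan is to show that $\goF$ and $\goF_0$ carry the same system of transition functions relative to a well-chosen cover, and to identify this system with the change-of-coordinate cocycle computed in Remark~\ref{rmk:changecoordinateoper}. First I would pick an open cover $\{U_i\}$ of $C$ on which $\goF$ is trivial and each $U_i$ carries a coordinate $t_i$; such a cover exists because $B$-torsors are Zariski-locally trivial and coordinates exist locally. On each $U_i$ Proposition~\ref{prop:descrlocopervcan} gives a \emph{canonical} isomorphism of opers $(\goF,\nabla)|_{U_i}\simeq(U_i\times B,\,d+f_0\,dt_i+\omega_\nabla)$; discarding the connection yields a distinguished trivialization $\tau_i\colon\goF|_{U_i}\xrightarrow{\sim}U_i\times B$, well defined because opers have no automorphisms. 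On the same cover the trivializations $\triv_{t_i}$ of $\Aut_C$ push forward along $r_O$ to trivializations $\sigma_i$ of $\goF_0|_{U_i}$. It then suffices to prove that the transition functions $\tau_i\tau_j^{-1}$ and $\sigma_i\sigma_j^{-1}$ in $B(U_i\cap U_j)$ agree, for then the local maps $\sigma_i^{-1}\tau_i$ glue to a global isomorphism $\goF\simeq\goF_0$.

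Next I would compute both cocycles in the model $(B_2)_{\ad}$, using $r\left(\begin{smallmatrix}a&b\\0&1\end{smallmatrix}\right)=e(b)\check{\rho}(a)$. For $\goF$, Remark~\ref{rmk:changecoordinateoper} identifies $\tau_i\tau_j^{-1}$ (for coordinates $t=t_i$, $s=t_j$) with the gauge element $e\!\left(\frac{\partial_s^2 t}{2\partial_s t}\right)\check{\rho}(\partial_s t)$, i.e.\ the image under $r$ of the matrix with $a=\partial_s t$ and $b=\frac{\partial_s^2 t}{2\partial_s t}$. For $\goF_0$, Lemma~\ref{lem:changecoordautx} gives the $\Aut_C$ transition element $\triv_{st}^{\univ}(z)=\sum_{k\geq1}\frac{1}{k!}(\partial_t^k s)z^k$; its image in ${\Autpiu}_3 O$ is $(\partial_t s)z+\frac12(\partial_t^2 s)z^2$, and under the isomorphism ${\Autpiu}_3 O\simeq(B_2)_{\ad}$, $(\rho_1,\rho_2)\mapsto\left(\begin{smallmatrix}\rho_1&\rho_2/\rho_1\\0&1\end{smallmatrix}\right)$ — which one checks intertwines the group law $(\rho_1,\rho_2)(\sigma_1,\sigma_2)=(\rho_1\sigma_1,\sigma_1\rho_2+\sigma_2\rho_1^2)$ with that of $(B_2)_{\ad}$ — this becomes the matrix with $a=\partial_t s$ and $b=\frac{\partial_t^2 s}{2\partial_t s}$.

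To match the two, I would use the inverse-function identities $\partial_s t=(\partial_t s)^{-1}$ and $\partial_s^2 t=-\partial_t^2 s/(\partial_t s)^3$. They turn the $\goF$-matrix $\left(\partial_s t,\frac{\partial_s^2 t}{2\partial_s t}\right)=\left((\partial_t s)^{-1},-\frac{\partial_t^2 s}{2(\partial_t s)^2}\right)$ into exactly the inverse of the $\goF_0$-matrix $\left(\partial_t s,\frac{\partial_t^2 s}{2\partial_t s}\right)$. This single inversion is precisely the discrepancy produced when a left ${\Autpiu}O$-torsor such as $\Aut_C$ is pushed out to a right $B$-torsor, so once it is absorbed the two cocycles coincide and the local isomorphisms glue to the desired $\goF\simeq\goF_0$.

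The genuine mathematical content is thus the coincidence of the Schwarzian/$2$-jet datum of Remark~\ref{rmk:changecoordinateoper} with the $2$-truncation of the coordinate jet of Lemma~\ref{lem:changecoordautx}; everything else is formal. Accordingly, I expect the real obstacle to be purely organizational: pinning down mutually compatible conventions for the isomorphism ${\Autpiu}_3 O\simeq(B_2)_{\ad}$, for $r$, for the direction in which transition functions are read, and for the left-to-right torsor conversion, so that all the inverses cancel correctly. The only step requiring an actual computation is checking that $(\rho_1,\rho_2)\mapsto(\rho_1,\rho_2/\rho_1)$ is the group isomorphism implicit in $r_O$.
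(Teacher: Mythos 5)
Your proposal is correct and follows essentially the same route as the paper: compare the Čech cocycle of the canonical oper trivializations (Proposition \ref{prop:descrlocopervcan} and Remark \ref{rmk:changecoordinateoper}) with the push-forward along $r_O$ of the $\Aut_C$ cocycle from Lemma \ref{lem:changecoordautx}, match them through the explicit isomorphism ${\Autpiu}_3 O\simeq (B_2)_{\ad}$, and absorb the residual inversion into the left-versus-right torsor conversion. If anything, your explicit check via $\partial_s t=(\partial_t s)^{-1}$ and $\partial_s^2 t=-\partial_t^2 s/(\partial_t s)^3$ that the two $(B_2)_{\ad}$-valued cocycles are literally inverse to one another is more detailed than the paper, which simply asserts this from the torsor-orientation convention.
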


\begin{proof}
    We compare Zariski cocycles defining $\goF$ and $\goF_0$. Notice that with our notation, $\goF$ is a \emph{right} $B$ torsor, while $\goF_0$ is \emph{left} torsor. We remedy this by considering $\goF$ as a left torsor via the inverse action, so that we have to take the inverse of its defining cocycle.  Fix an affine Zariski covering $(U_i)$ of $C$, for which each $U_i$ admits a coordinate $t_i$ and $\goF$ is trivial on $U_i$. It follows from Proposition \ref{prop:descrlocopervcan} that there are unique (Opers do not have automorphisms) trivializations $\gamma_i : U_i \times  B \to \goF$ such that $\nabla$ is read via $\gamma_i$ in its canonical form (that of Proposition \ref{prop:descrlocopervcan}). It follows from the proof of Remark \ref{rmk:changecoordinateoper} that a cocycle defining $\goF$ , $c_{ji} = \gamma_j^{-1}\gamma_i$, relative to the open cover $U_i$ and the isomorphisms $\gamma_i$, is given by the following:
    \[
        c_{ji} = e\left( \frac{\partial_{t_i}^2t_j}{2\partial_{t_i}t_j} \right)\check{\rho}(\partial_{t_i}t_j).
    \]
    This is the inverse of the element of $B$ providing the isomorphism of Remark \ref{rmk:changecoordinateoper} because of the passage between right and left torsors. It follows that this cocycle comes from the map $(B_2)_{\text{ad}} \to B$, recall that the group $(B_2)_{\text{ad}}$ is isomorphic to the group of matrices
    \[
        (B_2)_{\text{ad}} \simeq \left\{ \left( \begin{matrix} a & b \\ 0 & 1\end{matrix} \right) \right\}
    \]
    Via the identification ${\Autpiu{3}} \simeq (B_2)_{\ad}$
    \[
    (z \mapsto az + bz^2) \leftrightarrow \left(\begin{matrix}a & b/a \\ 0 & 1 \end{matrix} \right)
    \]
    The cocycle $c_{ji}$ corresponds to the $\Autpiu{3} O$ cocycle 
    \[
        c^{\Aut}_{ji}(z) = (\partial_{t_i}t_j)z + \left(\frac{1}{2}\partial_{t_i}^2t_j\right) z^2
    \]
    Which is exactly the cocycle defining the $\Autpiu{3} O$-torsor $\Aut_C \times_{{\Autpiu{}} O} {\Autpiu{3}} O$ as Lemma \ref{lem:changecoordautx} shows.
\end{proof}

\bibliography{biblio.bib}

\newcommand{\etalchar}[1]{$^{#1}$}
\begin{thebibliography}{ABC{\etalchar{+}}24}

\bibitem[ABC{\etalchar{+}}24]{2024proofii}
D.~Arinkin, D.~Beraldo, J.~Campbell, L.~Chen, J.~Faergeman, D.~Gaitsgory, K.~Lin, S.~Raskin, and N.~Rozenblyum.
\newblock Proof of the geometric {L}anglands conjecture {II}: {K}ac-{M}oody localization and the {FLE}, 2024.

\bibitem[BD91]{beilinson1991quantization}
Alexander Beilinson and Vladimir Drinfeld.
\newblock Quantization of {H}itchin’s integrable system and {H}ecke eigensheaves, 1991.

\bibitem[BD04]{BDchirali}
Alexander Beilinson and Vladimir Drinfeld.
\newblock {\em Chiral algebras}, volume~51 of {\em American Mathematical Society Colloquium Publications}.
\newblock American Mathematical Society, Providence, RI, 2004.

\bibitem[BD05]{beilinson2005opers}
Alexander Beilinson and Vladimir Drinfeld.
\newblock Opers.
\newblock {\em arXiv preprint math/0501398}, 2005.

\bibitem[DdS84]{drinfeld1984lie}
V.~G. Drinfel'~d and V.~V. Sokolov.
\newblock Lie algebras and equations of {K}orteweg-de {V}ries type.
\newblock In {\em Current problems in mathematics, {V}ol. 24}, Itogi Nauki i Tekhniki, pages 81--180. Akad. Nauk SSSR, Vsesoyuz. Inst. Nauchn. i Tekhn. Inform., Moscow, 1984.

\bibitem[FF92]{feigin1992affine}
Boris Feigin and Edward Frenkel.
\newblock Affine {K}ac-{M}oody algebras at the critical level and {G}el' fand-{D}iki\u i\ algebras.
\newblock In {\em Infinite analysis, {P}art {A}, {B} ({K}yoto, 1991)}, volume~16 of {\em Adv. Ser. Math. Phys.}, pages 197--215. World Sci. Publ., River Edge, NJ, 1992.

\bibitem[FG09]{frenkel2009local}
Edward Frenkel and Dennis Gaitsgory.
\newblock Local geometric {L}anglands correspondence: the spherical case.
\newblock In {\em Algebraic analysis and around}, volume~54 of {\em Adv. Stud. Pure Math.}, pages 167--186. Math. Soc. Japan, Tokyo, 2009.

\bibitem[FGI{\etalchar{+}}05]{book2005fundamental}
Barbara Fantechi, Lothar G\"{o}ttsche, Luc Illusie, Steven~L. Kleiman, Nitin Nitsure, and Angelo Vistoli.
\newblock {\em Fundamental algebraic geometry}, volume 123 of {\em Mathematical Surveys and Monographs}.
\newblock American Mathematical Society, Providence, RI, 2005.
\newblock Grothendieck's FGA explained.

\bibitem[Gro71]{SGA1}
Alexander Grothendieck.
\newblock {\em Rev\^etements \'etales et groupe fondamental (SGA 1)}, volume 224 of {\em Lecture notes in mathematics}.
\newblock Springer-Verlag, 1971.

\end{thebibliography}
\bibliographystyle{alpha}

\end{document}